\newtheorem{proposition}{Proposition}[section]
\newtheorem{theorem}{Theorem}[section]
\newtheorem{definition}{Definition}[section]
\newtheorem{remark}{Remark}[section]
\newtheorem{corollary}{Corollary}[section]
\newtheorem{example}{Example}[section]
\begin{document}


\begin{center}
\textbf{\Large A simple characterization of homogeneous Young measures and weak $L^1$ convergence of their densities}
\end{center}
\begin{center}
\textbf{{\large Piotr Pucha{\l}a}}\\
Institute of Mathematics, Czestochowa University of Technology, Armii Krajowej 21, 42-200 Cz\c{e}stochowa, Poland\\
Email:    piotr.puchala@im.pcz.pl, p.st.puchala@gmail.com
\end{center}


\begin{abstract}
We formulate a simple characterization
of homogeneous Young measures associated with measurable functions. It is based on the notion of the quasi-Young measure introduced in the previous article published in this Journal. First, homogeneous Young measures associated with the measurable functions are recognized as the constant mappings defined  on the domain of the underlying function with values in the space of probability measures on the range of these functions. Then the characterization of homogeneous Young measures via image measures is formulated. Finally, we investigate the connections between weak convergence of the homogeneous Young measures understood as elements of the Banach space of scalar valued measures and the weak$^\ast$ $L^1$ sequential convergence of their densities. A scalar case of the smooth functions and their Young measures being Lebesgue-Stieltjes measures is also analyzed.\\

\textbf{keywords}: homogeneous Young measures; weak convergence of measures; weak convergence of functions; optimization

\textbf{AMS Subject Classification}: 46N10; 49M30; 74N15
 \bigskip
\end{abstract}

\section[]{Introduction}
\quad Young measures is an abstract measure theoretic tool which has arisen in the context of minimizing bounded from below integral functionals not attaining their infima. The often used direct method relays on constructing minimizing consi\-de\-red functional $\mathcal J$ sequence $(u_n)$ of functions. This sequence is converging to some function $u_0$ in an appropriate (usually weak$^\ast$) topology, while the sequence $\mathcal J(u_n)$ of real numbers converges to $\inf\mathcal J$. However, in general it is not true that the sequence $(f(x,u_n(x),\nabla u_n(x)))$ of compositions of the integrand $f$ of $\mathcal J$ with elements of $(u_n)$ converges to $f(x,u_0(x),\nabla u_0(x)))$. This is connected with the lack of quasiconvexity of the integrand $f$ with respect to its third variable. This situation is often taking place in elasticity theory (here $f$ is the density of the internal energy of the displaced body). See for example \cite{Muller, Pedregal, Puchala2} and references cited there for more details.

It is Laurence Chisholm Young who has first realized that there is need to enlarge the space of admissible functions to the space of more general objects. In \cite{Young}, the very first article devoted to the subject, these objects are called by the author 'generalized curves'. Today we call them 'Young measures'. In fact, "Young measure' is not a single measure, but a family of probability measures. More precisely, let there be given:
\begin{itemize}
\item
\mbox{$\varOmega$ -- a nonempty, bounded open subset of $\mathbb R^d$ with Lebesgue measure $M>0$;}
\item
a compact set $K\subset\mathbb R^l$;
\item
$(u_n)$ -- a sequence of measurable functions from $\varOmega$ to $K$, convergent to some function $u_0$ weakly$^{\ast}$ in an appropriate function space;
\item
$f$ -- an arbitrary continuous real valued function on $\mathbb{R}^d$.
\end{itemize}
The sequence $(f(u_n))$  has a subsequence  
weakly$^{\ast}$ convergent to some function $g$. However,  in general $g\neq f(u_0)$.  It can be proved that there exists a subsequence of $(f(u_n))$, not relabelled, and a family $(\nu_x)_{x\in\varOmega}$ of probability measures with supports $\textnormal{supp}\nu_x\subseteq K$, such that 
$\forall\,f\in C(\mathbb{R}^d)\;\forall w\in L^1(\varOmega)$ there holds
\[
\lim\limits_{n\to\infty}\int\limits_{\varOmega}f(u_n(x))w(x)dx=
\int\limits_{\varOmega}\int\limits_Kf(s)\nu_x(ds)w(x)dx:=
\int\limits_{\varOmega}\overline{f}(x)w(x)dx.
\]
This family of probability measures is today called a \emph{Young measure associated with the sequence} $(u_n)$. 

It often happens, both in theory and applications, that the Young measure is a 'one element family', that is it does not depend on $x\in\varOmega$. We call such Young measure a \emph{homogeneous} one. Homogeneous Young measures are the first and often the only examples of the Young measures associated with specific sequences of functions. They are also the main object of interest in this article.

Calculating an explicit form of particular Young measure is an important task. In elasticity theory, for example, Young measures can be regarded as means of the limits of minimizing sequences of the energy functional. These means summarize the spatial oscillatory properties of minimizing sequences, thus conserving some of that information; information that is entirely lost when quasiconvexification of the original functional is used as the minimization method. To quote from \cite{Muller}, page 121: \emph{The Young measure describes the local phase proportions in an infinitesimally fine mixture (modelled mathematically by a sequence that develops finer and finer oscillations)}, and a page earlier: \emph{The Young measure captures the essential feature of minimizing sequences}. It is worth noting, that the Young measures calculated in \cite{Muller}, in the context of elasticity theory or micromagnetism, are homogeneous ones. This indicates their role. This is also a case in many other books or articles devoted to the subject. Unfortunately, there is still no any general method of calculating their explicit form (homogeneous or not) when associated with specific, in particular minimizing, sequence. Some of the existing methods relay on periodic extensions of the elements of the sequence and application the generalized version of the Riemann-Lebesgue lemma or calculating weak$^\ast$ limits of function sequences.

In the article \cite{Puchala1} there has been proposed an elementary method of calculation an explicit form of Young measures. It is based on the notion of the 'quasi-Young measure' introduced there and the approach to Young measures as in \cite{Roubicek}. This approach enables us to look at them as at objects associated with \emph{any} measurable function defined on $\varOmega$ with values in $K$. Using only the change of variable theorem the explicit form of the quasi-Young measures for functions that are piecewise constant or piecewise invertible with differentiable inverses has been calculated. The result extends for sequences of oscillating functions of that shape. Finally, it has been proved that calculated quasi-Young measures are \emph{the} Young measures associated with considered (sequences of) functions. Significantly, all of them are homogeneous.

This article can be viewed as the continuation of \cite{Puchala1}. First, we recognize quasi-Young measures as homogeneous Young measures which are, in turn, constant mappings on $\varOmega$ with values in the space of probability measures on $K$. Then we provide the characterization of homogeneous Young measures associated with measurable functions form $\varOmega$ to $K$ as images of the normalized Lebesgue measure on $\varOmega$ with respect to the underlying functions. The proofs of these facts are really very simple, but it seems that such characterization of homogeneous Young measures has not been formulated yet. Then we show that the weak convergence of the densities of the homogeneous Young measures is equivalent to the weak convergence of these measures themselves. Here measures are understood as vectors in the Banach space of scalar valued measures with the total variation norm. The proof of this fact is also simple but relies on the nontrivial characterization of the weak sequential $L^1$ convergence of the sequence of functions.  In a special one-dimensional case the Young measure associated with function having differentiable inverse is a Lebesgue-Stieltjes measure and we can state the result for mappings having inverses of the $C^\infty$ class.

\section[]{Preliminaries and notation} 
We gather now some necessary information about Young measures. As it has been mentioned above, our approach is as in \cite{Roubicek}, where the material is presented in great detail. It is sketched in \cite{Puchala1}. See also the references cited in aforementioned articles.

Let $\varOmega$ be an open subset of $\mathbb{R}^d$ with Lebesgue measure $M>0$, $d\mu(x):=\tfrac 1 Mdx$, where $dx$ is the $d$-dimensional Lebesgue measure on $\varOmega$ and let $K\subset\mathbb{R}^l$ be compact. We will denote:
\begin{itemize}
\item
$rca(K)$ -- the space of regular, countably additive scalar measures on $K$, equipped with the norm $\|\rho\|_{rca(K)}:=\vert \rho\vert(\varOmega)$, where $\vert\cdot\vert$ stands in this case for the total variation of the measure $\rho$. With this norm $rca(K)$ is a Banach space, so we can consider a weak convergence of its elements;
\item
$rca^1(K)$ -- the subset of $rca(K)$ with elements being probability measures on $K$;
\item
$L_{w^\ast}^{\infty}(\varOmega ,\textnormal{rca}(K))$ -- the set of the weakly$^\ast$ measurable mappings
\[
\nu\colon\varOmega\ni x\to\nu(x)\in rca(K).
\]
(It follows that homogeneous Young measures are those from the above mappings, that are constant a.e. in $\varOmega$)
\end{itemize}
We equip this set with the norm
\[
\Vert\nu\Vert_{L_{w^\ast}^{\infty}(\varOmega ,\textnormal{rca}(K))}:=
\textnormal{ess}\sup\bigl\{\Vert\nu (x)\Vert_{\textnormal{rca}(K)}:
x\in\varOmega\bigr\}.
\]

Usually by a Young measure we understand a family 
$(\nu_x)_{x\in\varOmega}$ of regular proba\-bility Borel measures on $K$, indexed by elements $x$ of $\varOmega$ . However, 
we should remember that the Young measure is in fact a weakly$^\ast$ measurable mapping defined on an open set $\varOmega\subset\mathbb{R}^d$ having positive Lebesgue measure with values in the set $\textnormal{rca}^1(K)$.
The set of the Young measures on the compact set $K\subset\mathbb{R}^l$ will be denoted by $\mathcal{Y}(\varOmega ,K)$:
\[
\mathcal{Y}(\varOmega ,K):=\bigl\{\nu=(\nu (x))\in
L^{\infty}_{w^\ast}(\varOmega,\textnormal{rca}(K)):\nu_x\in
\textnormal{rca}^1(K)\;\textnormal{for a.a }x\in\varOmega\bigr\}.
\]
\begin{remark}
In \cite{Roubicek} Young measures are defined as \emph{weakly} measurable mappings, but it seems to be an innacuracy, since $rca(K)$ is in fact a conjugate space. Compare footnote 80 on page 36 in \cite{Roubicek} with, for example, definition 2.1.1 (d) on page109 in \cite{Gasinski}. However, when considering weak convergence of the Young measures, we will look at the $rca(K)$ as at the normed space itself with total variation norm.
\end{remark}
\begin{definition}
We say that a family $(\nu_x)_{x\in\varOmega}$ is a quasi-Young measure associa\-ted with the measurable function 
$u\colon\mathbb{R}^d\supset\varOmega\to K\subset\mathbb{R}^l$ if for every integrable function $\beta\colon K\to\mathbb{R}$ there holds
\[
\int\limits_{K}\beta(k)d\nu_x(k)=
\int\limits_{\varOmega}\beta(u(x))d\mu(x).
\]
We will write $\nu^u$ to indicate that the (quasi-)Young measure $\nu$ is associated with the function $u$.
\end{definition} 
\begin{remark}
This definition is slightly more general than the one formulated in \cite{Puchala1}. However, when dealing with Young measures, we must restrict ourselves to continuous functions $\beta$ to be able to use Riesz representation theorems. See \cite{Roubicek} for details.
\end{remark}
\section[]{Homogeneous Young measures}
We first prove that quasi-Young measures are constant mappings.
\begin{proposition}
Let $\nu^u$ be the quasi-Young measure associated with the Borel function $u$. Then $\nu^u$, regarded as a mapping from $\varOmega$ with values in $\textnormal{rca}(K)$, is constant.
\end{proposition}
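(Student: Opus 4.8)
The plan is to show that the right-hand side of the defining equation in the definition of the quasi-Young measure does not depend on $x$, and to deduce from this that the mapping $x\mapsto\nu^u(x)$ is constant. The key observation is that the quantity
\[
\int\limits_{\varOmega}\beta(u(x))\,d\mu(x)
\]
is a single real number: it is an integral over the whole of $\varOmega$ with respect to the fixed normalized Lebesgue measure $\mu$, and the variable of integration is a bound variable that has nothing to do with the index $x$ labelling the measure $\nu_x$. Thus for every integrable $\beta\colon K\to\mathbb{R}$ the value $\int_K\beta(k)\,d\nu_x(k)$ is forced to equal this $x$-independent constant.

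First I would fix two arbitrary points $x_1,x_2\in\varOmega$ and, for an arbitrary integrable $\beta\colon K\to\mathbb{R}$, apply the defining identity at each of them. Since both sides yield the same number $\int_{\varOmega}\beta(u(x))\,d\mu(x)$, I obtain
\[
\int\limits_{K}\beta(k)\,d\nu_{x_1}(k)=
\int\limits_{K}\beta(k)\,d\nu_{x_2}(k).
\]
Because this holds for \emph{every} integrable $\beta$, and in particular for every continuous (indeed every bounded Borel) function on the compact set $K$, I would invoke the fact that a finite regular Borel measure on $K$ is uniquely determined by its action on such test functions. Hence $\nu_{x_1}=\nu_{x_2}$ as elements of $\textnormal{rca}(K)$, and since $x_1,x_2$ were arbitrary, the mapping $x\mapsto\nu^u(x)$ is constant on $\varOmega$.

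I expect the only genuine point requiring care to be this last appeal to the uniqueness of a measure given its integrals against a separating class of test functions; everything else is immediate from the observation that the right-hand side carries no dependence on the index $x$. One should check that the class of admissible $\beta$ is rich enough to separate measures in $\textnormal{rca}(K)$, which is clear since it contains all continuous functions on the compact $K$ and the Riesz representation theorem guarantees that the pairing of $\textnormal{rca}(K)$ with $C(K)$ is nondegenerate. Strictly, the conclusion holds for every pair $x_1,x_2$, so it is an equality everywhere rather than merely almost everywhere; if one prefers to phrase the result modulo null sets one can simply say the mapping is constant a.e., which already suffices to identify $\nu^u$ as a homogeneous Young measure in the sense recalled in the preliminaries.
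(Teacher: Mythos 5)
Your proposal is correct and is essentially the paper's own argument: both rest on the single observation that the right-hand side $\int_{\varOmega}\beta(u(x))\,d\mu(x)$ is independent of the index, so $\int_K\beta\,d\nu_{x_1}=\int_K\beta\,d\nu_{x_2}$ for all admissible $\beta$ (the paper phrases this as a proof by contradiction, you argue directly). If anything, your version is slightly more complete, since you explicitly justify the step the paper leaves implicit, namely that equality of the integrals over a separating class of test functions forces $\nu_{x_1}=\nu_{x_2}$ in $\textnormal{rca}(K)$.
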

\begin{proof} Suppose that $\nu^u$ is not a constant function. Then there exist 
$x_1,\,x_2\in\varOmega$, $x_1\neq x_2$, such that $\nu^u_{x_1}\neq\nu^u_{x_2}$. Then for any integrable $\beta\colon K\to\mathbb{R}$ there holds
\[
\int\limits_{K}\beta(k)d\nu^u_{x_1}(k)=\int\limits_{\varOmega}\beta(u(x))d\mu(x)=
\int\limits_{K}\beta(k)d\nu^u_{x_2}(k),
\]
a contradiction.
\end{proof}
\begin{corollary}
Let $\beta\colon K\to\mathbb{R}$ be a continuous function. Then the quasi-Young measure 
$\nu^u$ associated with the function $u$ is a Young measure, that is it is an element 
of the set $\mathcal{Y}(\varOmega ,K)$.
\end{corollary}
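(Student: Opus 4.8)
The plan is to verify directly the two defining requirements for membership in $\mathcal{Y}(\varOmega ,K)$: weak$^\ast$ measurability of the mapping $x\mapsto\nu^u_x$, and the property that $\nu^u_x\in rca^1(K)$ for almost all $x$. The first requirement is handed to us for free by the preceding Proposition. Since $\nu^u$ is a constant mapping $\varOmega\to rca(K)$, say $\nu^u_x\equiv\nu$, and constant mappings are trivially measurable (for any fixed $g$ in the predual $C(K)$ the scalar function $x\mapsto\int_K g\,d\nu^u_x$ is constant, hence measurable), we immediately obtain $\nu^u\in L^{\infty}_{w^\ast}(\varOmega,rca(K))$. Thus everything reduces to showing that the single measure $\nu$ is a probability measure on $K$.

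To establish this I would exploit the defining identity of the quasi-Young measure with judiciously chosen test functions $\beta$, which is precisely where the continuity hypothesis in the statement enters. First, taking $\beta\equiv 1$ (continuous on $K$) gives
\[
\nu(K)=\int_K 1\,d\nu=\int_\varOmega 1\,d\mu(x)=\mu(\varOmega)=1,
\]
since $\mu=\tfrac{1}{M}\,dx$ is the normalized Lebesgue measure, so $\nu$ has total mass one. Next, for positivity, I would take an arbitrary nonnegative $\beta\in C(K)$; then $\beta(u(x))\geq 0$ pointwise and $\mu$ is a positive measure, so the right-hand side $\int_\varOmega\beta(u(x))\,d\mu(x)\geq 0$, whence $\int_K\beta\,d\nu\geq 0$ for every nonnegative continuous $\beta$.

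The one genuine point requiring care --- and the step I would regard as the crux --- is passing from ``$\int_K\beta\,d\nu\geq 0$ for all nonnegative $\beta\in C(K)$'' to ``$\nu$ is a positive regular Borel measure''. This is exactly the content of the Riesz representation theorem: the linear functional $\beta\mapsto\int_\varOmega\beta(u(x))\,d\mu(x)$ on $C(K)$ is bounded and positive, hence is represented by a unique positive element of $rca(K)$, which by uniqueness must coincide with $\nu$. This is also why the statement restricts $\beta$ to continuous functions, as anticipated in the Remark following the definition, since Riesz representation is unavailable for merely integrable test functions. Once positivity and regularity are secured, combining with the normalization $\nu(K)=1$ shows $\nu\in rca^1(K)$, and therefore $\nu^u\in\mathcal{Y}(\varOmega ,K)$, completing the argument.
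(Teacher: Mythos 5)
Your proposal is correct and follows the same skeleton as the paper's own proof, but it is strictly more careful at one point, and that point is worth flagging. The paper's proof consists exactly of your first two steps: the constant mapping is trivially weakly$^\ast$ measurable, hence lies in $L^{\infty}_{w^\ast}(\varOmega,\textnormal{rca}(K))$, and taking $\beta\equiv 1$ gives $\nu^u(K)=\mu(\varOmega)=1$, from which the paper immediately concludes $\nu^u\in\textnormal{rca}^1(K)$. What the paper does \emph{not} do is verify positivity: an element of $\textnormal{rca}(K)$ is a priori a scalar (signed) measure, and total mass one alone does not make it a probability measure (e.g.\ $2\delta_{k_1}-\delta_{k_2}$ has total mass one). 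Your additional step --- testing against nonnegative $\beta\in C(K)$, observing that $\int_\varOmega\beta(u(x))\,d\mu(x)\geq 0$ because $\mu$ is positive, and then invoking the Riesz representation theorem (or regularity of $\nu$) to upgrade ``$\int_K\beta\,d\nu\geq 0$ for all nonnegative continuous $\beta$'' to ``$\nu\geq 0$ as a measure'' --- closes this gap and is exactly where the continuity hypothesis on $\beta$ earns its keep. So your argument is not merely a restatement: it supplies the positivity verification that the published proof tacitly assumes, at the modest cost of one application of Riesz representation.
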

\begin{proof} Take $\beta\equiv 1$. Then 
\[
\int\limits_{K}d\nu^u(k)=\int\limits_{K}\beta(k)d\nu^u(k)=
\int\limits_{\varOmega}\beta(u(x))d\mu(x)=\int\limits_{\varOmega}d\mu(x)=1,
\]
which means that $\nu^u\in\textnormal{rca}^1(K)$. As a constant function it is weakly measurable, so that $\nu^u\in L_{w^\ast}^{\infty}(\varOmega ,\textnormal{rca}(K))$.
\end{proof}
\begin{corollary}
Quasi-Young measures associated with measurable functions $u\colon\varOmega\to K$ are precisely the homogeneous Young measures associated with them.
\end{corollary}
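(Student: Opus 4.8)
The plan is to read the statement as an equality of two classes of objects and to prove it by two inclusions, observing that one of them is already essentially contained in the preceding Proposition and Corollary. Throughout I would use the definition of a homogeneous Young measure recalled in Section~2: a Young measure $\nu\in\mathcal Y(\varOmega,K)$ whose defining mapping $x\mapsto\nu_x$ is constant a.e.\ on $\varOmega$.

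For the inclusion ``every quasi-Young measure is a homogeneous Young measure'' I would combine the two previous results. By the Proposition the quasi-Young measure $\nu^u$ is a constant mapping from $\varOmega$ into $rca(K)$, and by the first Corollary its constant value lies in $rca^1(K)$ while the mapping is weakly$^\ast$ measurable, so $\nu^u\in\mathcal Y(\varOmega,K)$. Being a Young measure that is constant a.e., $\nu^u$ is homogeneous by definition, which settles this direction in one line.

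For the converse I would take an arbitrary homogeneous Young measure $\nu$ associated with $u$, so that $\nu_x=\nu_0\in rca^1(K)$ for a.a.\ $x$, and identify $\nu_0$ with $\nu^u$ by checking the defining averaging identity. Evaluating the fundamental Young measure relation at a continuous $\beta$ and the constant weight $w\equiv1$, then pulling the now $x$-independent inner integral out of the outer one and using the normalization $\mu(\varOmega)=1$, I expect to reach
\[
\int_K\beta(k)\,d\nu_0(k)=\int_\varOmega\beta(u(x))\,d\mu(x)\qquad\text{for every }\beta\in C(K),
\]
which says exactly that $\nu_0$ is the image of $\mu$ under $u$. Since this is precisely the relation defining a quasi-Young measure, it forces $\nu_0=\nu^u$.

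The main obstacle sits entirely in this converse direction and has two sources. First, as already noted in the preliminaries, the Young measure relation is at our disposal only for continuous test functions, whereas the quasi-Young measure is defined through \emph{integrable} $\beta$; the identity obtained on $C(K)$ must therefore be propagated to all integrable $\beta$ by a density and change-of-variables argument, exactly as in the original construction of $\nu^u$. Second, passing to the constant weight and evaluating the limit implicitly rely on the sequence-based description of the homogeneous Young measure associated with $u$ (via the generalized Riemann--Lebesgue lemma, as in \cite{Puchala1}); making the identification $\nu_0=u_\ast\mu$ fully rigorous is where the genuine content lies, after which uniqueness of the image measure closes the argument.
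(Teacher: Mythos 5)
The paper offers no proof of this corollary at all: it is stated as an immediate consequence of the preceding Proposition (the quasi-Young measure $\nu^u$ is a constant mapping into $rca(K)$) and the preceding Corollary ($\nu^u\in\mathcal Y(\varOmega,K)$), and your first inclusion reproduces exactly that one-line synthesis, correctly. So the direction the paper actually cares about is fine.

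The converse inclusion you add is where I would push back. The paper never defines what it means for a homogeneous Young measure to be ``associated with'' a single measurable function $u$ except through the quasi-Young relation itself (equivalently, through the image-measure characterization proved immediately afterwards as Theorem 3.1). Within that framework the reverse inclusion is definitional and there is nothing to extract from the ``fundamental Young measure relation'', which is a statement about sequences. If you nevertheless try to instantiate that relation for the single function $u$ via the constant sequence $u_n\equiv u$, the resulting Young measure is the Dirac family $x\mapsto\delta_{u(x)}$, which is homogeneous only when $u$ is essentially constant; the homogeneous object $u_\ast\mu$ arises only from the oscillating (periodically rescaled) sequences built from $u$ as in \cite{Puchala1}, and that construction already identifies the limit with the quasi-Young measure, so your argument would either be circular or would be re-proving Theorem 3.1 rather than this corollary. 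You sense this yourself when you write that making $\nu_0=u_\ast\mu$ rigorous ``is where the genuine content lies'' --- but that content is not required here, and the passage from continuous to integrable $\beta$ that you flag is likewise a non-issue for the statement as the paper intends it. In short: keep your first paragraph, drop the converse.
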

Due to the fact that homogeneous Young measures are constant functions, we will write '$\nu$' instead of '$\nu=(\nu_x)_{x\in\varOmega}$'.

Now we will prove that homogeneous Young measures are images of Borel measures with respect to the underlying functions. Recall that 
$u\colon\mathbb{R}^d\supset\varOmega\ni x\to u(x)\in K\subset\mathbb{R}^l$ and that $\mu$ is normed to unity Lebesgue measure on $\varOmega$.
\begin{theorem}
A homogeneous Young measure $\nu$ is associated with Borel function $u$ if and only if $\nu$ is an image of $\mu$ under $u$.
\end{theorem}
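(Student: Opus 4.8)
The plan is to reduce both implications to the classical image measure theorem (the abstract change-of-variables formula for pushforward measures). I would write $\mu\circ u^{-1}$ for the image of $\mu$ under $u$, i.e.\ the Borel measure on $K$ defined by $(\mu\circ u^{-1})(B):=\mu(u^{-1}(B))$ for every Borel set $B\subseteq K$. The tool I would invoke states that for any Borel function $u\colon\varOmega\to K$ and any $\beta\colon K\to\mathbb R$ integrable with respect to $\mu\circ u^{-1}$ (equivalently, $\beta\circ u$ integrable with respect to $\mu$) one has
\[
\int\limits_K\beta(k)\,d(\mu\circ u^{-1})(k)=\int\limits_\varOmega\beta(u(x))\,d\mu(x).
\]
Since $\mu$ is a probability measure and $K$ is compact, every continuous $\beta$ and every bounded Borel $\beta$ --- in particular every indicator $\mathbf 1_B$ --- is admissible, which supplies all the test functions I will need.

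For the forward implication, suppose $\nu$ is the homogeneous Young measure associated with $u$. By the defining relation of the quasi-Young measure (which, by the preceding corollary, coincides with the homogeneous Young measure) followed by the image measure theorem, for every integrable $\beta$
\[
\int\limits_K\beta(k)\,d\nu(k)=\int\limits_\varOmega\beta(u(x))\,d\mu(x)=\int\limits_K\beta(k)\,d(\mu\circ u^{-1})(k).
\]
For the converse, assuming $\nu=\mu\circ u^{-1}$ and reading the same chain of equalities from right to left recovers exactly the defining relation of the Young measure associated with $u$. Thus each direction is essentially a one-line consequence of the change-of-variables formula once the appropriate test functions are inserted.

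The only genuine step, which I expect to be the main (though still routine) obstacle, is the passage from ``$\int_K\beta\,d\nu=\int_K\beta\,d(\mu\circ u^{-1})$ for all admissible $\beta$'' to the measure equality $\nu=\mu\circ u^{-1}$. I would settle it by specializing $\beta$ to indicator functions $\mathbf 1_B$ of arbitrary Borel sets $B\subseteq K$: these are integrable because the measures are finite, and the two integrals then read $\nu(B)$ and $(\mu\circ u^{-1})(B)$, forcing agreement on every Borel set. Alternatively, since both measures lie in $\textnormal{rca}(K)$ with $K$ compact, equality of $\int_K\beta\,d\nu$ and $\int_K\beta\,d(\mu\circ u^{-1})$ for all $\beta\in C(K)$ already forces $\nu=\mu\circ u^{-1}$ by the uniqueness part of the Riesz representation theorem; this second variant is the one compatible with restricting to continuous test functions, as is required when one works with genuine Young measures (cf.\ the Remark following the Definition).
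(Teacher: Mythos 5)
Your proof is correct and follows essentially the same route as the paper, which likewise reduces both implications to the chain of equalities $\int_K\beta\,d\nu=\int_\varOmega\beta(u(x))\,d\mu=\int_K\beta\,d(\mu\circ u^{-1})$ given by the image measure (change-of-variables) theorem. You are in fact slightly more careful than the paper, since you explicitly justify the passage from equality of integrals against test functions to equality of the measures themselves (via indicators or the Riesz representation theorem), a step the paper leaves implicit.
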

\begin{proof} The theorem follows from the equalities 
\[
\int\limits_{K}\beta(k)d\nu(k)=\int\limits_{\varOmega}\beta(u(x))d\mu(x)=\int\limits_{K}\beta(k)d\mu u^{-1}.
\]
\end{proof}
\section[]{Weak convergence} 
\subsection[]{Certain facts on weak convergence of functions and measures}
Let $(X,\mathcal A,\rho)$ be a measure space and consider a sequence $(v_n)$ of scalar functions defined on $X$ and integrable with respect to the measure $\rho$ (that is, $\forall n\in\mathbb N\; v_n\in L^1_\rho(X)$) and a function $v\in L^1_\rho(X)$. Recall that $(v_n)$ converges weakly sequentially to $v$ if
\[
\forall g\in L^\infty_\rho(X)\quad \lim\limits_{n\to\infty}\int\limits_Xv_ngd\rho=\int\limits_Xvgd\rho.
\]
Next theorem characterizes weak sequential $L^1$ convergence of functions and weak convergence of measures. We refer the reader to \cite{Czaja}, chapter 6 or \cite{Diestel}, chapter VII.
\begin{theorem}\label{CzajaDiestel}
\begin{itemize}
\item[(a)]
let $X$ be a locally compact Hausdorff space and $(X,\mathcal A,\rho)$ -- a measure space with $\rho$ regular. A sequence 
$(v_n)\subset L^1_\rho(X)$ converges weakly to some $v\in L^1_\rho(X)$ if and only if\, $\forall A\in\mathcal A$ the limit
\[
\lim\limits_{n\to\infty}\int\limits_Av_nd\rho
\]
exists and is finite;
\item[(b)]
let $X$ be a locally compact Hausdorff space and denote by $\mathcal B(X)$ the $\sigma$-algebra of Borel subsets of $X$. A sequence $(\rho_n)$ of scalar measures on $\mathcal B(X)$ converges weakly to some scalar measure $\rho$ on $\mathcal B(X)$ if and only if\, $\forall A\in\mathcal B(X)$ the limit
\[
\lim\limits_{n\to\infty}\rho_n(A)
\]
exists and is finite.
\end{itemize}
\end{theorem}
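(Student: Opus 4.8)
The plan is to treat the two parts in parallel, since a function $v_n\in L^1_\rho(X)$ is nothing but the density of the absolutely continuous measure $\lambda_n(A):=\int_A v_n\,d\rho$, so that (a) is essentially the absolutely-continuous instance of (b): the hypothesis in (a) that $\lim_n\int_A v_n\,d\rho$ exists for every $A\in\mathcal A$ is exactly setwise convergence of the $\lambda_n$. In both parts the ``only if'' direction is immediate. For (a), weak convergence $v_n\to v$ tested against $g=\mathbf{1}_A\in L^\infty_\rho(X)$ gives $\int_A v_n\,d\rho\to\int_A v\,d\rho$, a finite limit; for (b), the evaluation $\tau\mapsto\tau(A)$ is a bounded linear functional on the Banach space $\textnormal{rca}(X)$ (since $|\tau(A)|\le\|\tau\|_{\textnormal{rca}(X)}$), so weak convergence $\rho_n\to\rho$ forces $\rho_n(A)\to\rho(A)$.

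For the substantial ``if'' direction of (a), I would proceed as follows. The set functions $\lambda_n$ are countably additive and each is absolutely continuous with respect to $\rho$, and by hypothesis $\lambda(A):=\lim_n\lambda_n(A)$ exists for all $A\in\mathcal A$. The Vitali--Hahn--Saks theorem then yields that the family $(\lambda_n)$ is uniformly absolutely continuous with respect to $\rho$ and that $\lambda$ is itself countably additive and $\rho$-absolutely continuous. Uniform absolute continuity together with a uniform bound $\sup_n\|v_n\|_{L^1_\rho}<\infty$ (which follows from the finiteness of the setwise limits via the Nikodym boundedness theorem) gives uniform integrability of $(v_n)$, hence, by the Dunford--Pettis theorem, relative weak compactness of $(v_n)$ in $L^1_\rho(X)$. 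By Radon--Nikodym write $\lambda(A)=\int_A v\,d\rho$ with $v\in L^1_\rho(X)$. Finally I would identify the limit: by Eberlein--\v Smulian every weakly convergent subsequence has some limit $w$ with $\int_A w\,d\rho=\lambda(A)=\int_A v\,d\rho$ for all $A$, so $w=v$ a.e.; since all subsequential limits coincide with $v$, the whole sequence converges weakly to $v$.

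The ``if'' direction of (b) runs along the same lines with the measure-theoretic tools in place of the $L^1$ ones. Setwise convergence of $(\rho_n)$ and Nikodym's convergence theorem give that $\rho(A):=\lim_n\rho_n(A)$ is countably additive and that $(\rho_n)$ is uniformly countably additive; the Bartle--Dunford--Schwartz characterization of relative weak compactness in $\textnormal{rca}(X)$ then makes $(\rho_n)$ relatively weakly compact. Any weak subsequential limit $\tau$ satisfies $\tau(A)=\lim_k\rho_{n_k}(A)=\rho(A)$ for every $A$ (again because $\tau\mapsto\tau(A)$ is continuous), so $\tau=\rho$, and the full sequence converges weakly to $\rho$.

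The main obstacle is the step that upgrades the merely pointwise/setwise convergence of the hypotheses to the uniform control needed for compactness: this is precisely the content of the Vitali--Hahn--Saks and Nikodym theorems, whose proofs rest on a Baire-category (sliding-hump) argument on the complete metric space of measurable sets modulo $\rho$-null sets under the metric $d(A,B)=\rho(A\triangle B)$. Everything else is bookkeeping: reducing weak convergence to the evaluation functionals $\mathbf{1}_A$ (resp.\ $\tau\mapsto\tau(A)$), invoking the Dunford--Pettis / Bartle--Dunford--Schwartz compactness criteria, and pinning down the limit by its action on all sets. One caveat to check is $\sigma$-finiteness (or finiteness) of $\rho$, which the standard statements of Vitali--Hahn--Saks and of the $L^1$--$L^\infty$ duality require.
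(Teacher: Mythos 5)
The paper states this theorem without proof, merely citing Benedetto--Czaja (chapter 6) and Diestel (chapter VII); your outline is precisely the standard argument given in those sources (indicator functions and evaluation functionals $\tau\mapsto\tau(A)$ for the easy direction; Vitali--Hahn--Saks, the Nikodym boundedness and convergence theorems, Dunford--Pettis, and Bartle--Dunford--Schwartz for the converse, with the limit pinned down through subsequences via Eberlein--\v{S}mulian), so it is correct and consistent with the justification the paper intends. Your closing caveat about $\sigma$-finiteness is the right thing to flag: one reduces to the $\sigma$-finite case by noting each $v_n$ vanishes outside a $\sigma$-finite set, which is what makes Radon--Nikodym and the $L^1$--$L^\infty$ duality applicable.
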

\subsection[]{Weak convergence of homogeneous Young measures and their densities}
Consider the function $u\colon\varOmega\to K$ of the form
\[
u:=\sum\limits_{i=1}^nu_i\chi_{\varOmega_i},
\]
where:
\begin{itemize} 
\item 
the open sets $\varOmega_i$, $i=1,2,\dots,n$, are pairwise disjoint and 
$\bigcup_{i=1}^n\overline{\varOmega}_i=\overline{\varOmega}$; denote this partition of $\varOmega$ by $\{\varOmega\}$;
\item
the functions $u_i\colon\varOmega_i\to K$, $i=1,2,\dots,n$, have continuously differentiable inverses $u_i^{-1}$ with Jacobians
$J_{u_i^{-1}}$;
\item
for any $i=1,2,\dots,n$ $\overline{u_i(\varOmega_i)}=K$.
\end{itemize}
\begin{theorem} \label{JaOpti}
(\cite{Puchala1}) The Young measure $\nu^u$ associated with $u$ is a homogeneous Young measure that is abolutely continuous with respect to the Lebesgue measure $dy$ on $K$. Its density is of the form
\[
g(y)=\frac 1 M\sum\limits_{i=1}^n\vert J_{u_i^{-1}}(y)\vert.
\]
\end{theorem}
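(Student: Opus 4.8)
The plan is to obtain the density through a single change of variables, using the characterization of Section 3 that identifies $\nu^u$ with the image measure $\mu u^{-1}$. By Theorem 3.1, or directly from the defining relation of Definition 2.1, it suffices to show that for every integrable $\beta\colon K\to\mathbb R$ one has
\[
\int\limits_{\varOmega}\beta(u(x))\,d\mu(x)=\int\limits_K\beta(y)\,g(y)\,dy,\qquad g(y)=\frac 1 M\sum_{i=1}^n|J_{u_i^{-1}}(y)|,
\]
since this identity determines $\nu^u$ uniquely and exhibits $g$ as its Lebesgue density. That $\nu^u$ is a \emph{homogeneous} Young measure is already supplied by Proposition 3.1 together with Corollary 3.1, so the whole content of the statement lies in computing the integral above. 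I note that for the Jacobian determinants $J_{u_i^{-1}}$ to be defined one needs $l=d$, which I assume throughout.

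First I would exploit the piecewise structure of $u$. Since the sets $\varOmega_i$ are pairwise disjoint, $u\equiv u_i$ on $\varOmega_i$, and the residual set $\varOmega\setminus\bigcup_{i=1}^n\varOmega_i\subseteq\bigcup_{i=1}^n\partial\varOmega_i$ is Lebesgue-null (an implicit regularity requirement on the partition $\{\varOmega\}$, satisfied for the piecewise-smooth partitions arising in practice), the integral splits as
\[
\int\limits_{\varOmega}\beta(u(x))\,d\mu(x)=\frac 1 M\sum_{i=1}^n\int\limits_{\varOmega_i}\beta(u_i(x))\,dx.
\]
Next, on each index $i$ I would apply the change of variables theorem to the continuously differentiable injection $u_i^{-1}\colon K\to\overline{\varOmega_i}$, viewed as the transformation. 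Writing $\phi_i:=u_i^{-1}$, the substitution formula in its standard form for $C^1$ injections gives $\int_K\beta(y)\,|J_{u_i^{-1}}(y)|\,dy=\int_{\phi_i(K)}\beta(\phi_i^{-1}(x))\,dx$, and on $\varOmega_i$ we have $\phi_i^{-1}=u_i$, so the right-hand side equals $\int_{\varOmega_i}\beta(u_i(x))\,dx$ up to integration over the exceptional set $\phi_i(K)\setminus\varOmega_i$.

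The step that requires care, and which I regard as the main (though mild) obstacle, is the control of this exceptional set $\phi_i(K)\setminus\varOmega_i$. Because $\phi_i=u_i^{-1}$ is continuous and inverts $u_i$, one has $\varOmega_i=\phi_i(u_i(\varOmega_i))\subseteq\phi_i(K)\subseteq\overline{\varOmega_i}$, whence $\phi_i(K)$ and $\varOmega_i$ differ only by a subset of the boundary $\partial\varOmega_i$, which is null by the same regularity of the partition used above; the contribution of this null set to the integral vanishes. Consequently $\int_{\varOmega_i}\beta(u_i(x))\,dx=\int_K\beta(y)\,|J_{u_i^{-1}}(y)|\,dy$ for each $i$. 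Summing over $i$ and factoring out $\tfrac 1 M$ yields the displayed identity with the stated $g$. Finally, taking $\beta\equiv 1$ gives $\int_K g\,dy=1$, so $g$ is a nonnegative element of $L^1(K,dy)$ and $\nu^u=g\,dy$ is indeed a probability measure absolutely continuous with respect to Lebesgue measure, completing the proof.
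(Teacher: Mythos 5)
Your argument is correct and follows essentially the same route as the source: the paper states this theorem without proof, citing \cite{Puchala1}, where (as the introduction here summarizes) the density is obtained exactly by splitting over the partition $\{\varOmega_i\}$ and applying the change of variables theorem to each $u_i^{-1}$. Your added care about the null boundary set $\phi_i(K)\setminus\varOmega_i$ and the implicit assumption $l=d$ are reasonable clarifications of hypotheses the paper leaves tacit, not a departure from its method.
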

Consider now a family $\{\varOmega^l\}$, $l\in\mathbb N$, with elements $\varOmega_i^l$, of open partitions of $\varOmega$  and a sequence $(u^l)$ of functions of the form $u^l:=\sum_{i=1}^{n(l)}u_i^l\chi_{\varOmega_i^l}$ satisfying, for fixed $l$, the above respective conditions. 
According to Theorem \ref{JaOpti} the Young measure associated with $u^l$ is absolutely continuous with respect to the Lebesgue measure $dy$ on $K$ with density
\[
g^l(y)=\frac 1 M\sum\limits_{i=1}^{n(l)}\vert J_{(u_i^l)^{-1}}(y)\vert.
\]
\begin{theorem}\label{equiconv}
Let $(u^l)$ be the sequence of functions described above and denote by $\nu^l$ the Young measure with density $g^l$ associated with the function $u^l$. Then the sequence $(g^l)$ is weakly convergent in $L^1(K)$ to some function $h$ if and only if the sequence $(\nu^l)$ is weakly convergent to some measure $\eta$.
\end{theorem}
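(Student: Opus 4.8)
The plan is to let Theorem \ref{CzajaDiestel} do essentially all the work, with absolute continuity of the $\nu^l$ serving as the bridge that makes the function-theoretic and the measure-theoretic convergence criteria literally the same condition.

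First I would record the one elementary identity on which everything rests. By Theorem \ref{JaOpti}, for each $l$ the Young measure $\nu^l$ is absolutely continuous with respect to the Lebesgue measure $dy$ on $K$ with density $g^l$, so that for every Borel set $A\subseteq K$ there holds
\[
\nu^l(A)=\int_A g^l(y)\,dy.
\]
This couples the set-function behaviour of the measures $\nu^l$ to the behaviour of the set-integrals of the densities $g^l$, and it is the only place where the specific form of $g^l$ is used.

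Next I would apply the two parts of Theorem \ref{CzajaDiestel} with $X=K$, which is compact and hence locally compact Hausdorff, and with $\rho=dy$ the Lebesgue measure on $K$, which is finite and regular, so that the hypotheses are met; I would take the common index family to be $\mathcal A=\mathcal B(K)$ in both parts. Part (a), applied to $(g^l)\subset L^1(K)$, asserts that $(g^l)$ converges weakly in $L^1(K)$ to some $h$ if and only if for every Borel $A\subseteq K$ the limit $\lim_{l}\int_A g^l\,dy$ exists and is finite. Part (b), applied to the sequence $(\nu^l)$ of scalar (indeed probability) measures on $\mathcal B(K)$, asserts that $(\nu^l)$ converges weakly to some measure $\eta$ if and only if for every Borel $A\subseteq K$ the limit $\lim_{l}\nu^l(A)$ exists and is finite. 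Substituting the identity above, the two right-hand conditions coincide verbatim, and the claimed equivalence follows immediately. As a by-product one reads off $\eta(A)=\lim_{l}\int_A g^l\,dy=\int_A h\,dy$, so the limit measure $\eta$ is itself absolutely continuous with density $h$.

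I do not expect a genuine obstacle beyond bookkeeping. The only points that require care are verifying that the hypotheses of Theorem \ref{CzajaDiestel} hold (compactness of $K$ yields local compactness, and finiteness and regularity of Lebesgue measure on $K$), checking that the $g^l$ are Borel measurable so that they genuinely lie in $L^1$ of the Borel measure space (they are sums of moduli of continuous Jacobians, hence continuous), and noticing that using the Borel $\sigma$-algebra in both parts makes the two set-function criteria identical rather than merely comparable. The entire substance of the argument is carried by Theorem \ref{CzajaDiestel}; the contribution of the present theorem is the observation that absolute continuity of $\nu^l$ collapses its two characterizations into a single condition.
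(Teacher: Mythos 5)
Your proposal is correct and follows essentially the same route as the paper: both arguments reduce weak $L^1$ convergence of $(g^l)$ and weak convergence of $(\nu^l)$ to the common set-function criterion of Theorem \ref{CzajaDiestel} via the identity $\nu^l(A)=\int_A g^l\,dy$ supplied by absolute continuity. Your version is in fact more careful than the paper's, since you explicitly verify the hypotheses of Theorem \ref{CzajaDiestel} and align the $\sigma$-algebras in its two parts.
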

\begin{proof}. ($\Rightarrow$) Since $(g^l)$ is weakly convergent in $L^1(K)$, then for any measurable $A\subseteq K$ the limit
\[
\lim\limits_{l\to\infty}\int\limits_Ag^ldy
\]
exists and is finite. This in turn is equivalent to the fact that the sequence $(\nu^l)$ of Young measures is weakly convergent to some measure $\eta$.\\
$(\Leftarrow)$ We proceed as above, but start the reasoning from the weak convergence of the sequence $(\nu^l)$.
\end{proof}

Let $I$ be an open interval in $\mathbb R$ with Lebesgue measure $M>0$ and $u$ -- a strictly monotonic differentiable real valued function on $I$. We can assume that $u$ is strictly increasing. Then the set 
$K:=\overline{u(I)}$ is compact and for any $\beta\in C(K,\mathbb R)$ we have
\[
\int\limits_K\beta(y)\tfrac 1 M(u^{-1})'(y)dy=\int\limits_K\beta(y)d\tfrac 1 Mu^{-1}(y).
\]
This means that the (homogeneous) Young measure associated with $u$ is a Lebesgue-Stieltjes measure on $K$. This observation together with standard inductive argument lead to the following one-dimensional corollary to theorem \ref{equiconv}.
\begin{corollary}
Let $(u_n)$ be a sequence of real valued functions from a bounded, nondegenerate interval $I\subset\mathbb R$ such that: 
\begin{itemize}
\item[(i)]
for any $n\in\mathbb N$ $u_n$ is a $C^\infty$-diffeomorphism;
\item[(ii)]
$\overline{u_n^{(l)}(I)}:=K_l$ is compact, $l\in\mathbb N\cup\{0\}$;
\item[(iii)]
$u_n^{(l)}$ is strictly positive or negative, $n\in\mathbb N$.
\end{itemize}
Then for any fixed $l\in\mathbb N$ the weak  $L^1$ convergence of the sequence $\bigl((u_n^{-1})^{(l)}\bigr)$ is equivalent to the weak convergence of the sequence of the Young-Lebesgue-Stieltjes measures associated with the elements of the sequence $\bigl((u_n^{-1})^{(l)}\bigr)$.
\end{corollary}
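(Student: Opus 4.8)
The plan is to read this corollary as the iterated, one-dimensional incarnation of Theorem~\ref{equiconv}, and to prove it by combining the scalar building block established just above it with an induction on the differentiation order $l$. The scalar observation preceding the corollary already does the essential work: for any strictly monotonic differentiable $\phi$ on an interval, with compact image, the homogeneous Young measure associated with $\phi$ is the Lebesgue--Stieltjes measure whose density (up to the normalizing constant $\tfrac1M$) is $(\phi^{-1})'$. This is exactly the $n=1$ instance of Theorem~\ref{JaOpti}, and it lets me identify ``Young--Lebesgue--Stieltjes measure'' with ``the absolutely continuous measure whose density is the derivative of the inverse.'' Everything then reduces to matching up densities and measures and quoting Theorem~\ref{equiconv}.

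First I would dispose of the base case $l=1$. Applying the building block to $\phi=u_n$ shows that the function $(u_n^{-1})^{(1)}=(u_n^{-1})'$ is, up to the constant $\tfrac1M$, precisely the density $g_n$ of the homogeneous Young measure $\nu^{u_n}$. Hence weak $L^1(K)$ convergence of the sequence $\bigl((u_n^{-1})'\bigr)$ is literally weak $L^1$ convergence of the densities $(g_n)$, and the scalar case of Theorem~\ref{equiconv} — itself nothing more than the two applications of Theorem~\ref{CzajaDiestel}(a) and (b) through setwise limits $\lim_l\int_A g^l\,dy$ — yields at once that this is equivalent to weak convergence of the associated Young--Lebesgue--Stieltjes measures. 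No new estimate is needed here beyond invoking Theorem~\ref{equiconv}.

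Next I would carry out the inductive step, which is the promised ``standard inductive argument.'' The point is to check that hypotheses (i)--(iii) are inherited at each order, so that the building block and Theorem~\ref{equiconv} may be reapplied to the next function in the hierarchy. Condition (i) keeps every $u_n$ a $C^\infty$-diffeomorphism, so each $u_n^{-1}$ is $C^\infty$; condition (ii) supplies the compact images $K_l$ that make each relevant measure live on a compact set; and condition (iii) is what must force each derived function $(u_n^{-1})^{(l)}$ to be strictly monotonic, i.e.\ sign-definite in its own derivative, so that it qualifies as an admissible scalar function for the building block. Chaining the equivalences produced at successive orders then promotes the $l=1$ statement to every fixed $l$.

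The hard part will be exactly this inductive sign bookkeeping, and I expect it to be the main obstacle. The derivatives of an inverse are governed by Fa\`a di Bruno--type expressions, e.g. $(u_n^{-1})'' = -u_n''/(u_n')^{3}$ and $(u_n^{-1})''' = \bigl(3(u_n'')^2 - u_n'u_n'''\bigr)/(u_n')^{5}$, and these are sign-definite combinations of the $u_n^{(k)}$ only when the signs of the individual derivatives $u_n^{(k)}$ are mutually compatible. The first two orders go through cleanly from (iii), but from the third order on one must either extract the precise compatibility condition hidden in (iii) or restrict attention to the orders $l$ for which monotonicity of $(u_n^{-1})^{(l)}$ is guaranteed; securing this is the only genuinely non-routine step, everything else being a direct appeal to the building block and to Theorem~\ref{equiconv}, hence ultimately to Theorem~\ref{CzajaDiestel}.
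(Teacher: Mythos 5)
Your overall route is the same as the paper's: the paper offers no written proof of this corollary beyond the sentence preceding it --- the scalar observation that for a strictly monotonic differentiable $u$ with $\overline{u(I)}=K$ compact one has
\[
\int\limits_K\beta(y)\tfrac 1M(u^{-1})'(y)\,dy=\int\limits_K\beta(y)\,d\tfrac 1Mu^{-1}(y),
\]
so that $\nu^{u}$ is the Lebesgue--Stieltjes measure with density $\tfrac 1M(u^{-1})'$ --- ``together with standard inductive argument'' and an appeal to Theorem~\ref{equiconv}. Your base case is exactly this and is correct: $(u_n^{-1})'$ is, up to $\tfrac 1M$, the density of $\nu^{u_n}$, so Theorem~\ref{equiconv} (really Theorem~\ref{CzajaDiestel}) converts weak $L^1$ convergence of $\bigl((u_n^{-1})'\bigr)$ into weak convergence of the measures $\tfrac 1M\,du_n^{-1}$ and back.

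Two remarks on your inductive step. First, an indexing point: at stage $l$ the function to which the scalar building block must be applied is $\phi_{n,l}:=\bigl((u_n^{-1})^{(l-1)}\bigr)^{-1}$, whose Young measure is the Lebesgue--Stieltjes measure generated by $(u_n^{-1})^{(l-1)}$, with density proportional to $(u_n^{-1})^{(l)}$; what the induction therefore consumes is strict monotonicity of $(u_n^{-1})^{(l-1)}$, i.e.\ sign-definiteness of $(u_n^{-1})^{(l)}$, not strict monotonicity of $(u_n^{-1})^{(l)}$ itself, which is only needed at the next stage. Second, and more importantly, the obstruction you flag is real and is resolved neither by your proposal nor by the paper: hypothesis (iii) controls the signs of $u_n^{(l)}$, whereas the induction needs the signs of $(u_n^{-1})^{(l)}$, and from
\[
(u_n^{-1})'''=\Bigl(\frac{3(u_n'')^2-u_n'\,u_n'''}{(u_n')^5}\Bigr)\circ u_n^{-1}
\]
onward, sign-definiteness of all the $u_n^{(k)}$ does not force sign-definiteness of the numerator. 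So as written your argument secures only the cases $l=1,2$, and you explicitly defer rather than close the remaining step. The natural repair is either to read (ii)--(iii) as conditions on the derivatives of $u_n^{-1}$ (which, since $u_n$ is a $C^\infty$-diffeomorphism and the roles of $u_n$ and $u_n^{-1}$ are symmetric, is evidently what the inductive argument is meant to feed on), or to add the compatibility condition you allude to. With that hypothesis in hand, the rest of your argument is a correct chain of appeals to the building block and to Theorem~\ref{equiconv}, and it coincides with what the paper intends by its ``standard inductive argument.''
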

\begin{example}(see \cite{Puchala3})
Let $I:=(a,b)$, $a<b$, $K:=[c,d]$, $c<d$ and let $(u_n)$ be a sequence of strictly monotonic functions from $I$ with values in $K$ such that:
\begin{itemize}
\item[(i)]
$\forall\,n\in\mathbb N\;\;\overline{u_n(I)}=K$;
\item[(ii)]
$\forall\,n\in\mathbb N$ $u_n$ has continuously differentiable inverse $(u_n^{-1})'$;
\item[(iii)]
the sequence $(u_n^{-1})'$ is nondecreasing.
\end{itemize}
By monotonicity of the sequence of derivatives for any Borel subset $A\subseteq K$ and any natural numbers $m\leq n$ we have
\[
\int\limits_A(u_m^{-1})'(y)dy\leq\int\limits_A(u_n^{-1})'(y)dy.
\]
Further, $\forall\,n\in\mathbb N$ the Young measure associated with function $(u_n^{-1})'$ is a homogeneous one. Since it is probability measure on $K$, the sequence $\Bigl(\int\limits_A(u_n^{-1})'(y)dy\Bigr)$ is monotonic and bounded. Thus for any Borel subset $A\subseteq K$ the limit
\[
\lim\limits_{n\to\infty}\int\limits_A(u_n^{-1})'(y)dy=\lim\limits_{n\to\infty}\int\limits_Adu_n^{-1}(y)
\]
exists and is finite, which by theorem \ref{CzajaDiestel} yields the weak $L^1$ convergence of the sequence
$\bigl((u_n^{-1})'\bigr)$. This, by theorem \ref{equiconv}, is equivalent to the weak convergence of the sequence of Young measures, whose elements are associated with respective elements of $\bigl((u_n^{-1})'\bigr)$.
\end{example} 

\textbf{Acknowledgements.}
Author would like to thank Professor Agnieszka Ka{\l}amajska for discussion that took place during Dynamical Systems and Applications IV conference in June 2016, {\L}\'od\'z, Poland.


\end{document}